\theoremstyle{plain}
\newtheorem{theorem}{Theorem}[section]
\newtheorem*{Theorem B}{Theorem B}
\newtheorem*{Theorem A}{Theorem A}
\newtheorem{lemma}{Lemma}[section]
\numberwithin{equation}{section}
\theoremstyle{remark}
\begin{document}
{\it{Accepted Manuscript: Comptes rendus de l'Academie bulgare des Sciences}}\\
\\

\centerline{\Large{A general inequality for contact CR-warped product}}

\smallskip
\centerline{\Large{submanifolds in cosymplectic space forms}}

\bigskip

\centerline{{{Falleh R. Al-Solamy and Siraj Uddin}}}

\begin{center}
{\small Dedicated to the MSU Distinguished Professor Bang-Yen Chen \\on the occasion of his 73rd birthday, in gratitude for his guidance and friendship }
\end{center}

\footnotetext{{\it{2010 AMS Mathematics Subject Classification:}} 53C40, 53C42, 53C15, 53D15.}
\sloppy
\vspace*{.15cm}
\begin{abstract}
B.-Y. Chen initiated the study of warped product submanifolds in his fundamental seminal papers \cite{C1,C2,C2.1}. In this paper, we study contact CR-warped  product submanifolds of cosymplectic space forms and prove an optimal inequality by using Gauss and Codazzi equations. In addition, we obtain two geometric inequalities for contact CR-warped product submanifolds with a compact invariant factor.

\noindent
{\bf{Key words:}} Warped products, contact CR-submanifolds, contact CR-warped product, compact manifold, geometric inequality, Dirichlet energy, cosymplectic space form.
\end{abstract}

\section{Introduction}
Let $(M_1, g_1)$ and $(M_2, g_2)$ be two Riemannian manifolds and $f:M_1\to(0, \infty)$ and $\pi_1:M_1\times M_2\to M_1,~\pi_2:M_1\times M_2\to M_2$, the projections map given by $\pi_1(p, q)=p$ and $\pi_2(p, q)=q$  for any $(p, q)\in M_1\times M_2$. Then, the warped product $M=M_1\times_fM_2$ is the product manifold $M_1\times M_2$ equipped with the Riemannian structure such that
\begin{align}
\label{1}
g(X, Y)=g_1(\pi_{1}{*}X, \pi_{1}{*}Y)+(f\circ\pi_1)^2g_2(\pi_{2}{*}X, \pi_{2}{*}Y)
\end{align}
for any $X, Y$ tangent to $M$, where $*$ is the symbol for the tangent maps. The function $f$ is called the warping function of $M$. In particular a warped product manifold is said to be {\it{trivial}} or Riemannian product manifold if the warping function is constant. 

 Let $M=M_1\times _fM_2$ be a warped product. For a vector field $X$ tangent to $M_1$, the lift of $X$ on $M=M_1\times _fM_2$ is the tangent vector field $\tilde X$ on $M=M_1\times _fM_2$ whose value at each $(p, q)$ is the lift $X_p$ to $(p, q)$. Thus, the lift of $X$ is the unique vector field on $M=M_1\times _fM_2$ that is $\pi_{M_1}$-related to $X$ and $\pi_{M_2}$-related to the zero vector field on $M_2$. The set of all such lifts of vector fields on $M_1$ is denoted by $\mathcal{L}(M_1)$. Similarly, we denote by $\mathcal{L}(M_2)$ the lifts of vector fields from vector fields tangent to $M_2$.
 
 Then for unit vector fields $X, Y\in\mathcal{L}(M_1)$ and $Z\in\mathcal{L}(M_2)$, we have
\begin{align}
\label{2}
\nabla_XZ=\nabla_ZX=X(\ln f)Z,\,\,\,\,g(\nabla_XY, Z)=0
\end{align}
which implies that (\cite{O}, page 210)
\begin{align}
\label{3}
K(X\wedge Z)=\frac{1}{f}\big\{(\nabla_XX)f-X^2f\big\}.
\end{align}
If we choose the local orthonormal frame $e_1,\cdots,e_n$ such that $e_1,\cdots, e_{n_1}$ are tangent to $M_1$ and $e_{n_1+1},\cdots,e_n$ are tangent to $M_2$, then we have
\begin{align}
\label{4}
\frac{\Delta f}{f}=\sum_{i=1}^{n_1}K(e_i\wedge e_j)
\end{align}
for each $j=n_1+1,\cdots, n$. For the most up-to-date survey on warped product manifolds and submanifolds, we refer to B.-Y. Chen's books \cite{book,book17} and his survey article \cite{C5}.

Recently, M.-I. Munteanu established an inequality in  \cite{Mun} for the squared norm of the second fundamental form of a contact CR-warped product submanifold in Sasakian space form along a similar line of B.-Y. Chen \cite{C3,C4}. Further, a similar inequality has been obtained for contact CR-warped products in Kenmotsu space forms  by Arslan et al. in \cite{Ar}. On the other hand,  warped product submanifolds of cosymplectic manifolds were studied in  \cite{KK}, \cite{MK} and \cite{U2,U3,U4}. 

Motivated by these work done in this spirit, we establish in this paper  the following inequality.\\

\begin{theorem} Let $\tilde M(c)$ be a $(2m+1)$-dimensional cosymplectic space form with constant sectional curvature $c$ and $M=M_T\times _fM_\perp$ be a warped product submanifold of $\tilde M(c)$. Then we have
\begin{enumerate}
\item[{(i)}] The squared norm of the second fundamental form $\sigma$ of $M$ satisfies
\begin{align}
\label{5}
\|\sigma\|^2\geq 2q\left(\|\nabla(\ln f)\|^2-\Delta(\ln f)+\frac{pc}{2}\right),
\end{align}
where $\dim M_T=2p+1,\,\dim M_\perp=q$ and $\nabla(\ln f)$ is the gradient of $\ln f$ and $\Delta$ is the Laplacian operator of $M_T$.
\item[{(ii)}] If the equality sign holds in (i), then $M_T$ is totally geodesic in $\tilde M(c)$ and $M_\perp$ is a totally umbilical submanifold of $\tilde M(c)$.
\end{enumerate}
\end{theorem}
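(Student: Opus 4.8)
The plan is to play the intrinsic warped‑product identity \eqref{4} against the Gauss equation, the decisive structural input being that the invariant factor $M_T$ is automatically minimal in $\tilde M(c)$. Throughout I write $\mathcal D$ for the invariant and $\mathcal D^\perp$ for the anti‑invariant distribution, so that $TM_T=\mathcal D\oplus\langle\xi\rangle$ and $\dim\mathcal D=2p$. First I would record the consequence of the cosymplectic condition $\tilde\nabla\phi=0,\ \tilde\nabla\xi=0$. Since $TM_T$ is $\phi$‑stable and contains $\xi$, its normal bundle $\nu_T$ in $\tilde M(c)$ is also $\phi$‑invariant, and $\phi^2=-\mathrm{Id}$ on $\nu_T$. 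Writing $\sigma^T$ for the second fundamental form of $M_T$ in $\tilde M(c)$, differentiating $\phi Y$ and comparing normal parts gives $\sigma^T(\phi X,\phi Y)=-\sigma^T(X,Y)$ for $X,Y\in\mathcal D$, while $\tilde\nabla\xi=0$ gives $\sigma^T(\xi,\cdot)=0$. Summing over a $\phi$‑adapted basis $\{e_1,\phi e_1,\dots,e_p,\phi e_p,\xi\}$ the paired terms cancel, so $M_T$ is minimal; as $M_T$ is totally geodesic in $M$ we have $\sigma^T=\sigma|_{TM_T\times TM_T}$, hence $H_T:=\sum_{i=1}^{2p+1}\sigma(e_i,e_i)=0$.

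Next I would compute the ambient mixed sectional curvatures. In a cosymplectic space form every plane containing $\xi$ is flat (as $\tilde R(\cdot,\cdot)\xi=0$), while for $e_i\in\mathcal D$, $e_j\in\mathcal D^\perp$ one has $g(\phi e_i,e_j)=0$ and therefore $\tilde K(e_i\wedge e_j)=\tfrac c4$; hence $\sum_{i=1}^{2p+1}\tilde K(e_i\wedge e_j)=2p\cdot\tfrac c4=\tfrac{pc}{2}$ for each $j$. Now fix $j\in\{n_1+1,\dots,n\}$, apply the Gauss equation $K(e_i\wedge e_j)=\tilde K(e_i\wedge e_j)+g(\sigma(e_i,e_i),\sigma(e_j,e_j))-\|\sigma(e_i,e_j)\|^2$, and sum over $i$. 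Using $H_T=0$ to annihilate the cross term $g(H_T,\sigma(e_j,e_j))$, together with \eqref{4}, I obtain $\sum_{i=1}^{2p+1}\|\sigma(e_i,e_j)\|^2=\tfrac{pc}{2}-\tfrac{\Delta f}{f}$; summing over the $q$ indices $j$ yields $\sum_{i,j}\|\sigma(e_i,e_j)\|^2=\tfrac{pqc}{2}-q\,\tfrac{\Delta f}{f}$.

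To finish (i) I would decompose $\|\sigma\|^2=\sum_{i,i'}\|\sigma(e_i,e_{i'})\|^2+2\sum_{i,j}\|\sigma(e_i,e_j)\|^2+\sum_{j,j'}\|\sigma(e_j,e_{j'})\|^2$ over the two factors and discard the two nonnegative end sums, leaving $\|\sigma\|^2\ge pqc-2q\,\tfrac{\Delta f}{f}$. The identity $\Delta(\ln f)=\tfrac{\Delta f}{f}+\|\nabla(\ln f)\|^2$ for the Laplacian of $M_T$ gives $-\tfrac{\Delta f}{f}=\|\nabla(\ln f)\|^2-\Delta(\ln f)$, and substituting turns the bound into \eqref{5}. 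For (ii), equality forces both discarded sums to vanish: $\sum_{i,i'}\|\sigma(e_i,e_{i'})\|^2=0$ says $\sigma$ vanishes on $TM_T\times TM_T$, i.e. $M_T$ is totally geodesic in $\tilde M(c)$; and $\sum_{j,j'}\|\sigma(e_j,e_{j'})\|^2=0$ says $\sigma$ vanishes on $TM_\perp\times TM_\perp$. Since the fibre $M_\perp$ is totally umbilical in $M$ with mean‑curvature vector $-\nabla(\ln f)$ (a standard warped‑product fact from \eqref{2}), its second fundamental form in $\tilde M(c)$ is then $-g(\cdot,\cdot)\,\nabla(\ln f)$, so $M_\perp$ is totally umbilical in $\tilde M(c)$.

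The hard part is the cross term: the Gauss equation inevitably produces $g\!\left(\sum_i\sigma(e_i,e_i),\sigma(e_j,e_j)\right)$, which has no sign in general, and without the minimality $H_T=0$ of the invariant factor the inequality simply does not close. Thus the whole argument rests on the structural computation of the first step; a secondary subtlety is that in the equality case the umbilicity of $M_\perp$ is produced by the warping rather than by $\sigma$ itself, which is exactly why $M_\perp$ comes out umbilical but not totally geodesic.
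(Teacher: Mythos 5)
Your proof is correct, and it reaches the theorem by a genuinely different route than the paper. The paper's argument is extrinsic and Codazzi-based: it applies \eqref{9} to $(X,\varphi X,Z)$ paired with $\varphi Z$, expands all the covariant-derivative terms of $\sigma$ via Lemma 3.1 (i.e.\ \eqref{13}) and the relation \eqref{15} through the long computation \eqref{16}--\eqref{26}, compares with the ambient curvature value \eqref{27}, and so obtains the pointwise identity \eqref{28}, which after summation over adapted frames becomes \eqref{29}; the passage to the inequality and its equality case then comes from discarding $\sigma(\mathcal{D},\mathcal{D})$ and $\sigma(\mathcal{D}^\perp,\mathcal{D}^\perp)$, quoting Theorem 3.1 (from \cite{U5}). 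You instead play the Gauss equation \eqref{7} against O'Neill's identity \eqref{4}: the problematic cross term $g\bigl(\sum_i\sigma(e_i,e_i),\sigma(e_j,e_j)\bigr)$ is annihilated by the minimality of the invariant leaf, which you correctly derive from $\tilde\nabla\varphi=0$ and $\tilde\nabla\xi=0$ (the standard fact that invariant submanifolds of cosymplectic manifolds are minimal, combined with the leaf $M_T\times\{s\}$ being totally geodesic in $M$ so that $\sigma^T=\sigma|_{TM_T\times TM_T}$), while the ambient mixed curvatures sum to $pc/2$ by \eqref{6}. Since $\|\nabla(\ln f)\|^2-\Delta(\ln f)=-\Delta f/f$ under the sign convention \eqref{10}, your intermediate equality $\sum_{i,j}\|\sigma(e_i,e_j)\|^2=\tfrac{pqc}{2}-q\,\tfrac{\Delta f}{f}$ is precisely the paper's key identity \eqref{29}, and your equality-case analysis (vanishing of $\sigma$ on each factor, hence leaves totally geodesic in $\tilde M(c)$ and fibers umbilical in $\tilde M(c)$ with mean curvature vector $-\nabla(\ln f)$ inherited from the warping) coincides with what the paper extracts from \eqref{30}. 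What your route buys: it is substantially shorter, bypasses Lemma 3.1 and the computations \eqref{16}--\eqref{26} entirely, and makes the constant transparent ($2p$ mixed planes of ambient curvature $c/4$ each, the planes containing $\xi$ being flat). What the paper's route buys: the Codazzi computation gives the finer relation \eqref{28}, valid pointwise for each individual pair of unit vectors $X,Z$ rather than only after tracing, and it establishes \eqref{13} and \eqref{15}, which encode how $\sigma$ couples with $\varphi$ and are reusable in other arguments.
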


The paper is organized as follows: Section 2 is devoted to preliminaries. In Section 3, first we develop some basic results for later use and then we prove Theorem 1.1. In the last section, we prove two geometric inequalities  as an application of Theorem 1.1 by considering compact invariant factor $M_T$.

\section{Preliminaries}
Let $\tilde M$ be a $(2m+1)$-dimensional almost contact manifold with almost contact structure $(\varphi,\xi, \eta)$, i.e., a structure vector field $\xi$, a $(1, 1)$ tensor field $\varphi$ and a $1$-form $\eta$ on $\tilde M$ such that $\varphi^2X=-X+\eta(X)\xi,~\eta(\xi)=1$, for any vector field $X$ on $\tilde M$ \cite{BL}. There always exists a compatible Riemannian metric $g$ satisfying $g(\varphi X,\varphi Y)=g(X, Y)-\eta(X)\eta(Y)$, for any vector field $X, Y$ tangent to $\tilde M$. Thus the manifold $\tilde M$ is said to be almost contact metric manifold and $(\varphi, \xi, \eta, g)$ is its almost contact metric structure. It is clear that $\eta(X)=g(X, \xi)$. The fundamental 2-form $\Phi$ on $\tilde M$ is defined as $\Phi(X, Y)=g(X, \varphi Y)$, for any vector fields $X,~Y$ tangent to $\tilde M$. The manifold $\tilde M$ is said to be {\it{almost cosymplectic}} if the forms $\eta$ and $\Phi$ are closed, i.e., $d\eta=0$ and $d\Phi=0$, where $d$ is an exterior differential operator. An almost cosyplectic and normal manifold is {\it{cosymplectic}}. It is well known that an almost contact metric manifold $\tilde M$ is cosymplectic if and only if $\tilde\nabla_X\varphi$ vanishes identically, where $\tilde\nabla$ is the Levi-Civita connection on $\tilde M$ \cite{L}.

A cosymplectic manifold $\tilde M$ with constant $\varphi$-sectional curvature is called a {\it{cosymplectic space form}} and denoted by $\tilde M(c)$. Then the Riemannian curvature tensor $\tilde R$ is given by 
\begin{align}
\label{6}
\tilde R(X, Y; Z, W)=&\frac{c}{4}\big\{g(X, W )g(Y, Z)-g(X, Z)g(Y, W )+g(X, \varphi W )g(Y, \varphi Z)\notag\\
&-g(X, \varphi Z)g(Y, \varphi W )-2g(X, \varphi Y )g(Z, \varphi W)\notag\\
&-g(X, W )\eta(Y )\eta(Z)+g(X, Z)\eta(Y )\eta(W )\notag\\
&-g(Y, Z)\eta(X)\eta(W )+g(Y, W )\eta(X)\eta(Z)\big\}.
\end{align}

Let $M$ be a $n$-dimensional Riemannian manifold isometrically immersed in a Riemannian manifold $\tilde M$. Then, the Gauss and Weingarten formulae are respectively given by $\tilde\nabla_X Y=\nabla_X Y+\sigma(X,Y)$ and $\tilde\nabla_XN=-A_NX+\nabla^\perp_XN$, for any $X,Y$ tangent to $M$, where $\nabla$ is the induced Riemannian connection on $M$, $N$ is a vector field normal to $M$, $\sigma$ is the second fundamental form of $M$, $\nabla^\perp$ is the
normal connection in the normal bundle $ TM^\perp$ and $A_N$ is the shape operator of the second fundamental form. They are related as $g(A_NX,Y)=g(\sigma(X,Y),N)$, where $g$ denotes the Riemannian metric on $\tilde M$ as well as the metric induced on $M$. 

Let $M$ be an $n$-dimensional submanifold of an almost contact metric $(2m+1)$-manifold $\tilde M$ such that restricted to $M$, the vectors $e_1,\cdots, e_n$ are tangent to $M$ and hence $e_{n+1},\cdots e_{2m+1}$ are normal to $M$. Then, the mean curvature vector $\vec H$ is defined by $\vec{H}=\frac{1}{n} tr\sigma=\frac{1}{n}\sum_{i,j=1}^n\sigma(e_i, e_i)$, where $\{e_1,\cdots, e_n\}$ is a local orthonormal frame of the tangent bundle $TM$ of $M$. A submanifold $M$ is called minimal in $\tilde M$ if its mean curvature vector vanishes identically and $M$ is totally geodesic in $\tilde M$, if $\sigma(X, Y)=0$, for all $X, Y$ tangent to $M$. If $\sigma(X,Y) =g(X,Y)H$ for all $X, Y$ tangent to $M$, then $M$ is {\it totally umbilical} submanifold of $\tilde M$. 

For any $X$ tangent to $M$, we decompose $\varphi X$ as $\phi X=PX+FX$, where $PX$ and $FX$ are the tangential and normal components of $\varphi X$, respectively. For a submanifold $M$ of an almost contact manifold $\bar M$, if $F$ is identically zero then $M$ is $invariant$ and if $P$ is identically zero then $M$ is {\it{anti-invariant}}.

Let $R$ and $\tilde R$ denote the Riemannian curvature tensors of $M$ and $\tilde M$, respectively. Then the equation of Gauss is given by
\begin{align}
\label{7}
R(X,Y;Z,W)=&\tilde R(X,Y,Z,W)+g(\sigma(X, W), \sigma(Y, Z))\notag\\
&-g(\sigma(X, Z), \sigma(Y, W)),
\end{align}
for $X, Y, Z, W$ tangent to $M$.

For the second fundamental form $\sigma$, we define the covariant derivative $\tilde\nabla\sigma$ by
\begin{align}
\label{8}
(\tilde\nabla_X\sigma)(Y, Z)=\nabla^\perp_X\sigma(Y, Z)-\sigma(\nabla_XY, Z)-\sigma(Y, \nabla_XZ)
\end{align}
for any $X, Y, Z$ tangent to $M$.

\noindent The equation of Codazzi is
\begin{align}
\label{9}
(\tilde R(X, Y)Z)^\perp=(\tilde\nabla_X\sigma)(Y, Z)-(\tilde\nabla_Y\sigma)(X, Z),
\end{align}
where $(\tilde R(X, Y)Z)^\perp$ is the normal component of $(\tilde R(X, Y)Z)$.

Let $M$ be a Riemannian $p$-manifold and $e_1,\cdots, e_p$ be an orthonormal frame fields on $M$. Then for a differentiable function $\psi$ on $M$, the Laplacian $\Delta\psi$ of $\psi$ is defined by
\begin{align}
\label{10}
\Delta\psi=\sum_{i=1}^n\big\{(\tilde\nabla_{e_i}e_i)\psi-e_ie_i\psi\big\}.
\end{align}
The scalar curvature of $M$ at a point $p$ in $M$ is given by
\begin{align}
\label{11}
\tau(p)=\sum_{1\leq i<j\leq n}K(e_i, e_j),
\end{align}
where $K(e_i, e_j)$ denotes the sectional curvature of the plane section spanned by $e_i$ and $e_j$.

Due to behaviour of the tensor field $\varphi$, there are different classes of submanifolds. We mention the following:

\begin{enumerate}
\item [(1)] A submanifold $M$ tangent to the structure vector field $\xi$ is called an invariant submanifold if $\varphi$ preserves any tangent space of $M$, i.e., $\varphi(T_pM)\subseteq T_pM$, for each $p\in M$.
\item [(2)] A submanifold $M$ tangent to the structure vector field $\xi$ is said to be an anti-invariant submanifold if $\varphi$ maps any tangent space of $M$ into the normal space, i.e., $\varphi(T_pM)\subseteq T_pM^\perp$, for each $p\in M$.
\item [(3)] A submanifold $M$ tangent to the structure vector field $\xi$ is called a contact CR-submanifold if it admits an invariant distribution $\mathcal{D}$ whose orthogonal complementary distribution $\mathcal{D}^\perp$ is anti-invariant, i.e., the tangent space of $M$ is decomposed as $TM=\mathcal{D}\oplus\mathcal{D}^\perp\oplus\langle\xi\rangle$ with $\varphi\mathcal{D}_p\subseteq \mathcal{D}_p$ and  $\varphi\mathcal{D}_p^\perp\subseteq T_pM^\perp$, for each $p\in M$, where $\langle\xi\rangle$ denotes the 1-dimensional distribution spanned by the structure vector field $\xi$.
\end{enumerate}

In this paper we study contact CR-warped product submanifolds, therefore we are concerned with the case (3). For a contact CR-submanifold $M$  of an almost contact metric manifold $\tilde M$, the normal bundle $TM^\perp$ is decomposed as
\begin{align}
\label{12}
TM^\perp=\varphi\mathcal{D}^\perp\oplus\mu,~~~\varphi\mathcal{D}^\perp\perp\mu
\end{align}
where $\mu$ is orthogonal complementary distribution of $\varphi\mathcal{D}^\perp$ which invariant normal subbundle of $TM^\perp$ with respect to $\varphi$.

\section{Proof of Theorem 1.1}
Before proving the theorem we need some basic results for warped products. A warped product submanifold $M$ is said to be a contact CR-warped product submanifold if $M=M_T\times_fM_\perp$ is the product of $M_T$ and $M_\perp$, where $M_T$ is an invariant submanifold and $M_\perp$ is an anti-invariant submanifold. We need the following results to prove our main Theorem 1.1.\\

\begin{lemma} \cite{U3} Let $M=M_T\times{_{f}M_\perp}$ be a contact CR-warped product submanifold of a cosymplectic manifold $\tilde M$. Then
\begin{align}
\label{13}
g(\sigma(X, Z), \varphi W)=-\varphi X(\ln f)g(Z, W),
\end{align}
for any $X\in\mathcal{L}(M_T)$ and $Z,~W\in\mathcal{L}(M_\perp)$.
\end{lemma}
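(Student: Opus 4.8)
The plan is to derive \eqref{13} directly from the cosymplectic condition $\tilde\nabla\varphi=0$, the Gauss formula, and the warped-product connection identity \eqref{2}, by differentiating along the anti-invariant factor and sliding $\varphi$ across the ambient connection. First I would use the symmetry of the second fundamental form to write $g(\sigma(X,Z),\varphi W)=g(\sigma(Z,X),\varphi W)$, and then the Gauss formula $\tilde\nabla_ZX=\nabla_ZX+\sigma(Z,X)$ together with \eqref{2}, which gives $\nabla_ZX=X(\ln f)Z$ tangent to $M$. Since $\varphi W$ is normal to $M$ (because $M_\perp$ is anti-invariant), the tangential term $\nabla_ZX$ drops out and I obtain $g(\sigma(X,Z),\varphi W)=g(\tilde\nabla_ZX,\varphi W)$.

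Next, because $g(X,\varphi W)=0$ ($X$ is tangent while $\varphi W$ is normal), metric compatibility of $\tilde\nabla$ transfers the derivative: $g(\tilde\nabla_ZX,\varphi W)=-g(X,\tilde\nabla_Z\varphi W)$. Here the cosymplectic hypothesis enters, namely $\tilde\nabla_Z\varphi W=\varphi\tilde\nabla_ZW$, and applying Gauss once more splits this into $\varphi\nabla_ZW+\varphi\sigma(Z,W)$. Thus the task reduces to evaluating $g(X,\varphi\nabla_ZW)$ and $g(X,\varphi\sigma(Z,W))$, for which I would repeatedly use the skew-symmetry $g(\varphi\,\cdot\,,\cdot)=-g(\cdot\,,\varphi\,\cdot)$ coming from the compatibility of $g$ with $\varphi$.

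The decisive step is that the anti-invariant term disappears: $g(X,\varphi\sigma(Z,W))=-g(\varphi X,\sigma(Z,W))=0$, because invariance of $M_T$ makes $\varphi X$ tangent to $M$ whereas $\sigma(Z,W)$ is normal. For the remaining term I would compute $g(X,\varphi\nabla_ZW)=-g(\varphi X,\nabla_ZW)$ and then, using $g(\varphi X,W)=0$ (the two factors are orthogonal) together with the warped-product identity $\nabla_Z\varphi X=(\varphi X)(\ln f)Z$ from \eqref{2}, obtain $g(\varphi X,\nabla_ZW)=-g(\nabla_Z\varphi X,W)=-(\varphi X)(\ln f)g(Z,W)$. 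Collecting the signs then yields $g(\sigma(X,Z),\varphi W)=-(\varphi X)(\ln f)g(Z,W)$, which is precisely \eqref{13}.

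The main obstacle is choosing the right vector field to differentiate. If one instead differentiates along the base direction $X$ and transfers $\varphi$ onto $\sigma(X,W)$, then $\varphi Z$ is normal and the computation closes up only into the symmetry relation $g(\sigma(X,Z),\varphi W)=g(\sigma(X,W),\varphi Z)$, which is correct but inconclusive. The argument succeeds precisely because differentiating along the fiber direction $Z$ lets the invariance of $M_T$ (tangency of $\varphi X$) annihilate the second-fundamental-form term, leaving the warping function to enter through \eqref{2}; note also that the structure field $\xi$ never appears in the calculation, so no separate treatment of the $\langle\xi\rangle$-direction is required.
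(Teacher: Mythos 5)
Your proof is correct, and it uses exactly the ingredients that the standard argument uses: the paper itself gives no proof of this lemma (it is quoted from the cited reference \cite{U3}), and the proof there runs along the same lines --- cosymplectic parallelism of $\varphi$, the Gauss formula, the warped-product identity \eqref{2}, and skew-symmetry of $\varphi$ with respect to $g$. Each of your steps checks out, including the observation that $g(\varphi X,\sigma(Z,W))=0$ by tangency of $\varphi X$ versus normality of $\sigma(Z,W)$, and the remark that no separate treatment of $\xi$ is needed (for $X=\xi$ both sides vanish since $\varphi\xi=0$). The only comment worth making is that your route can be shortened by one step: instead of transferring the derivative onto $\varphi W$ via $Zg(X,\varphi W)=0$, apply skew-symmetry first and then parallelism, $g(\tilde\nabla_ZX,\varphi W)=-g(\varphi\tilde\nabla_ZX,W)=-g(\tilde\nabla_Z\varphi X,W)=-g(\nabla_Z\varphi X,W)=-(\varphi X)(\ln f)\,g(Z,W)$, which kills the second-fundamental-form term automatically (since $W$ is tangent and $\sigma(Z,\varphi X)$ is normal) and never requires discussing $g(\varphi X,\sigma(Z,W))$ at all.
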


\begin{theorem} \cite{U5} Let $M=M_T\times{_{f}M_\perp}$ be a contact CR-warped product submanifold of a cosymplectic manifold $\tilde M$. Then
\begin{enumerate}
\item [(i)] The squared norm of the second fundamental form of $M$ satisfies
\begin{align}
\label{14}
\|\sigma\|^2\geq 2q\|\nabla(\ln f)\|^2
\end{align}
where $\nabla(\ln f)$ is gradient of the function $\ln f$ and $q$ is the dimension of $M_\perp$.

\item [(ii)] If equality holds in \eqref{14}, then $M_T$ is totally geodesic submanifold of $\tilde M$ and $M_\perp$ is totally umbilical in $\tilde M$ and hence $M$ is minimal in $\tilde M$.
\end{enumerate}
\end{theorem}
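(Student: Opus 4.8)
The plan is to bound $\|\sigma\|^2$ from below by retaining only the ``mixed'' components $\sigma(TM_T,TM_\perp)$, which are exactly the ones controlled by Lemma 3.1, and discarding the other (nonnegative) contributions. Since $M_T$ is invariant and tangent to $\xi$ with $\dim M_T=2p+1$, I would fix a local orthonormal frame of $TM_T$ of the form $\{e_1,\dots,e_p,\varphi e_1,\dots,\varphi e_p,\xi\}$, where the first $2p$ vectors span the invariant distribution $\mathcal{D}$, together with an orthonormal frame $\{e_1^*,\dots,e_q^*\}$ of $TM_\perp$; then $\{\varphi e_1^*,\dots,\varphi e_q^*\}$ is an orthonormal frame of $\varphi\mathcal{D}^\perp\subseteq TM^\perp$. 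Writing $\|\sigma\|^2$ as the sum of the three blocks coming from $TM_T\times TM_T$, $TM_\perp\times TM_\perp$, and (twice) $TM_T\times TM_\perp$, I would drop the first two blocks to get $\|\sigma\|^2\ge 2\sum_{i,a}|\sigma(e_i,e_a^*)|^2$.

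Next I would evaluate the mixed block using Lemma 3.1, which for $X\in\mathcal{L}(M_T)$ and $Z,W\in\mathcal{L}(M_\perp)$ gives $g(\sigma(X,Z),\varphi W)=-(\varphi X)(\ln f)\,g(Z,W)$, hence $g(\sigma(e_i,e_a^*),\varphi e_b^*)=-(\varphi e_i)(\ln f)\,\delta_{ab}$. Because $\{\varphi e_b^*\}$ is orthonormal in the normal bundle, the $\varphi\mathcal{D}^\perp$-component of $\sigma(e_i,e_a^*)$ already has squared norm $((\varphi e_i)(\ln f))^2$, so $|\sigma(e_i,e_a^*)|^2\ge((\varphi e_i)(\ln f))^2$. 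Summing over $a=1,\dots,q$ and over all frame vectors $e_i$ of $TM_T$ yields
\begin{align}
\label{15}
\|\sigma\|^2\ge 2q\sum_{i}\big((\varphi e_i)(\ln f)\big)^2 .
\end{align}

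The main point is then to identify $\sum_i((\varphi e_i)(\ln f))^2$ with $\|\nabla(\ln f)\|^2$. The summand for $e_i=\xi$ vanishes since $\varphi\xi=0$, so only the $\mathcal{D}$-directions survive; and as $\varphi$ restricts to an isometry of $\mathcal{D}$ (by $\varphi^2=-\mathrm{Id}$ on $\mathcal{D}$ and compatibility of $g$), the family $\{\varphi e_1,\dots,\varphi e_p,-e_1,\dots,-e_p\}$ is again an orthonormal basis of $\mathcal{D}$, whence $\sum_i((\varphi e_i)(\ln f))^2$ equals the squared norm of the $\mathcal{D}$-projection of $\nabla(\ln f)$. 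The remaining obstacle is to show $\nabla(\ln f)$ has no $\xi$-component, i.e. $\xi(\ln f)=0$: this I would obtain from the warped product relation \eqref{2}, applied with $X=\xi$, giving $\nabla_Z\xi=\xi(\ln f)Z$ for $Z\in\mathcal{L}(M_\perp)$, together with the cosymplectic identity $\tilde\nabla\xi=0$; comparing tangential parts forces $\xi(\ln f)=0$, so $\nabla(\ln f)\in\mathcal{D}$ and the sum equals $\|\nabla(\ln f)\|^2$. Substituting into \eqref{15} proves (i).

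Finally, for (ii) I would trace back the two discarded blocks: equality in (i) forces $\sigma(TM_T,TM_T)=0$ and $\sigma(TM_\perp,TM_\perp)=0$ (and that $\sigma(TM_T,TM_\perp)$ lies in $\varphi\mathcal{D}^\perp$). Since in the warped product $M=M_T\times_fM_\perp$ the leaf $M_T$ is totally geodesic and the fiber $M_\perp$ is totally umbilical in $M$ with mean curvature vector $-\nabla(\ln f)$ (\cite{O}), composing these with the immersion $M\hookrightarrow\tilde M$ and using the two vanishing blocks shows $M_T$ is totally geodesic in $\tilde M$ and $M_\perp$ is totally umbilical in $\tilde M$; minimality of $M$ then follows because the trace of $\sigma$ over the adapted frame is the sum of the two vanishing diagonal blocks. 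I expect the delicate step to be the reduction in the third paragraph---controlling the $\xi$-direction and checking that $\varphi$ recycles an orthonormal basis of $\mathcal{D}$---while the rest is a clean splitting of $\|\sigma\|^2$ and a direct application of Lemma 3.1.
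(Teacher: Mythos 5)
Your proof is correct and follows essentially the same route as the paper's source: the paper states this theorem without proof, citing \cite{U5}, but the argument there (and the way the paper later invokes it, retaining only the mixed part $\|\sigma\|^2_{T\perp}$ of the block decomposition) is exactly your strategy of discarding the $TM_T\times TM_T$ and $TM_\perp\times TM_\perp$ blocks and evaluating the mixed block via Lemma 3.1 over the adapted frame $\{e_i,\varphi e_i,\xi\}$. Your handling of the delicate points --- $\xi(\ln f)=0$ from $\tilde\nabla\xi=0$ in a cosymplectic manifold, and $\varphi$ permuting an orthonormal basis of $\mathcal{D}$ so that $\sum_i((\varphi e_i)\ln f)^2=\|\nabla(\ln f)\|^2$ --- is also the standard one and is sound.
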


\noindent Let $M=M_T\times_fM_\perp$ be a contact CR-warped product submanifold of a cosymplectic manifold $\tilde M$ such that  $\xi\in\mathcal{L}(M_T)$, where  $M_T$ and  $M_\perp$ are invariant and anti-invariant submanifolds of $\tilde M$, respectively. For any $X\in\mathcal{L}(M_T)$ and any $Z\in\mathcal{L}(M_\perp)$, we have
\begin{align*}
\sigma(\varphi X, Z)=\tilde\nabla_Z\varphi X-\nabla_Z\varphi X.
\end{align*}
Using the covariant derivative property of $\tilde\nabla\varphi$ and the cosymplectic characteristic equation with \eqref{2}, we derive
\begin{align}
\label{15}
\sigma(\varphi X, Z)&=\varphi\tilde\nabla_ZX-\varphi X(\ln f)Z\notag\\
&=\varphi\sigma(X, Z)+X(\ln f)\varphi Z-\varphi X(\ln f)Z.
\end{align}

\noindent{\it{Proof of Theorem 1.1.}}  Let $M=M_T\times_fM_\perp$ be a contact CR-warped product submanifold of a cosymplectic space form $\tilde M(c)$ such that  $\xi$ is tangential to $M_T$. Then, for the unit normal vectors $X\in\mathcal{L}(M_T)$ and $Z\in\mathcal{L}(M_\perp)$, from \eqref{9}, we have
\begin{align*}
\tilde R(X, \varphi X, Z, \varphi Z)=g((\nabla_X^\perp\sigma)(\varphi X, Z), \varphi Z)-g((\nabla_{\varphi X}^\perp\sigma)(X, Z), \varphi Z).
\end{align*}
Then from \eqref{8}, we derive
\begin{align}
\label{16}
\tilde R(X, \varphi X, Z, \varphi Z)&=g(\nabla_X^\perp\sigma(\varphi X, Z)-\sigma(\nabla_X\varphi X, Z)-\sigma(\varphi X, \nabla_XZ), \varphi Z)\notag\\
&-g(\nabla_{\varphi X}^\perp\sigma(X, Z)-\sigma(\nabla_{\varphi X}X, Z)-\sigma(X, \nabla_{\varphi X}Z), \varphi Z).
\end{align}
Now, we compute the following terms as follows
\begin{align*}
g(\nabla_X^\perp\sigma(\varphi X, Z), \varphi Z)&=Xg(\sigma(\varphi X, Z), \varphi Z)-g(\sigma(\varphi X, Z), \nabla_X^\perp\varphi Z)\\
&=Xg(\tilde\nabla_Z\varphi X, \varphi Z)-g(\sigma(\varphi X, Z), \tilde\nabla_X\varphi Z).
\end{align*}
Using the cosymplectic characteristic equation and the compatible metric property and the fact that $\xi$ is tangent to $M_T$, we derive
\begin{align*}
g(\nabla_X^\perp\sigma(\varphi X, Z), \varphi Z)=Xg(\tilde\nabla_ZX, Z)-g(\sigma(\varphi X, Z), \varphi\tilde\nabla_XZ).
\end{align*}
Then by Gauss formula and the relation \eqref{2}, we obtain
\begin{align*}
g(\nabla_X^\perp\sigma(\varphi X, Z), \varphi Z)&=X(X(\ln f))g(Z, Z)-X(\ln f)g(\sigma(\varphi X, Z), \varphi Z)\\
&-g(\sigma(\varphi X, Z), \varphi\sigma(X, Z)).
\end{align*}
Then from \eqref{13} and \eqref{15} , we get
\begin{align*}
g(\nabla_X^\perp\sigma(\varphi X, Z), \varphi Z)&=X^2(\ln f)g(Z, Z)+2X(\ln f)g(\nabla_XZ, Z)\\
&-(X(\ln f))^2g(Z, Z)-\|\sigma(X, Z)\|^2-\varphi X(\ln f)g(\sigma(X, Z), \varphi Z).
\end{align*}
Again using \eqref{2} and \eqref{13}, we derive
\begin{align}
\label{17}
g(\nabla_X^\perp\sigma(\varphi X, Z), \varphi Z)&=X^2(\ln f)g(Z, Z)+(X(\ln f))^2g(Z, Z)\notag\\
&-\|\sigma(X, Z)\|^2+(\varphi X(\ln f))^2g(Z, Z).
\end{align}
Since $M_T$ is invariant and totally geodesic in $M_T\times_fM_\perp$ \cite{Bi,C1}, then by the cosymlectic characteristic equation, we have
\begin{align}
\label{18}
g(\sigma(\nabla_X\varphi X, Z), \varphi Z)=g(\sigma(\varphi\nabla_XX, Z), \varphi Z).
\end{align}
Also, from \eqref{13}, we have $g(h(\varphi X, Z), \varphi W)=X(\ln f)g(Z, W)$, thus with the help of this fact \eqref{18} becomes
\begin{align}
\label{19}
g(\sigma(\nabla_X\varphi X, Z), \varphi Z)=(\nabla_XX\ln f)g(Z, Z).
\end{align}
Similarly, we obtain the following
\begin{align*}
g(\sigma(\varphi X, \nabla_XZ), \varphi Z)=(X\ln f)g(\sigma(\varphi X, Z), \varphi Z).
\end{align*}
Then from \eqref{13}, we get
\begin{align}
\label{20}
g(\sigma(\varphi X, \nabla_XZ), \varphi Z)=(X\ln f)^2g(Z, Z).
\end{align}
Now, interchanging $X$ by $\varphi X$ in \eqref{17}, \eqref{19} and \eqref{20}, then the following relations hold respectively
\begin{align}
\label{21}
-g(\nabla_{\varphi X}^\perp\sigma(X, Z), \varphi Z)&=((\varphi X)^2\ln f)g(Z, Z)+(\varphi X\ln f)^2 g(Z, Z)\notag\\
&-\|\sigma(\varphi X, Z)\|^2+(X\ln f)^2g(Z, Z),
\end{align}
\begin{align}
\label{22}
g(\sigma(\nabla_{\varphi X}X, Z), \varphi Z)=-(\nabla_{\varphi X}\varphi X\ln f)g(Z, Z)
\end{align}
and
\begin{align}
\label{23}
g(\sigma(X, \nabla_{\varphi X}Z), \varphi Z)=-(\varphi X\ln f)^2g(Z, Z).
\end{align}
Now, we compute 
\begin{align*}
\|\sigma(\varphi X, Z)\|^2=g(\sigma(\varphi X, Z), \sigma(\varphi X, Z)).
\end{align*}
Then from \eqref{15}, we derive
\begin{align*}
\|\sigma(\varphi X, Z)\|^2&=g(\varphi \sigma(X, Z), \varphi \sigma(X, Z))+2(\varphi X\ln f)g(\sigma(X, Z), \varphi Z)\\
&+(X\ln f)^2g(Z, Z)+(\varphi X\ln f)^2g(Z, Z).
\end{align*}
Then by the property of compatible metric and \eqref{13}, we obtain
\begin{align}
\label{24}
\|\sigma(\varphi X, Z)\|^2=\|\sigma(X, Z)\|^2-(\varphi X\ln f)^2g(Z, Z)+(X\ln f)^2g(Z, Z).
\end{align}
With the help of \eqref{24}, the relation \eqref{21} becomes
\begin{align}
\label{25}
g(\nabla_{\varphi X}^\perp\sigma(X, Z), \varphi Z)=&-((\varphi X)^2\ln f)g(Z, Z)+\|\sigma(X, Z)\|^2\notag\\
&-2(\varphi X\ln f)^2g(Z, Z).
\end{align}
Then from \eqref{16},  \eqref{17},  \eqref{19},  \eqref{20} and  \eqref{25}, we derive
\begin{align}
\label{26}
\tilde R(X, \varphi X, Z, \varphi Z)&=(X^2\ln f)g(Z, Z)-(\nabla_XX\ln f)g(Z, Z)\notag\\
&+((\varphi X)^2\ln f)g(Z, Z)-(\nabla_{\varphi X}\varphi X\ln f)g(Z, Z)\notag\\
&-2\|\sigma(X, Z)\|^2+2(\varphi X\ln f)^2g(Z, Z).
\end{align}
Also, from \eqref{6} and \eqref{7}, we have
\begin{align}
\label{27}
\tilde R(X, \varphi X, Z, \varphi Z)=\frac{c}{2}\big(\eta^2(X)-\|X\|^2\big)g(Z, Z),
\end{align}
for any unit tangent vector $X\in\mathcal{L}(M_T)$ and $Z\in\mathcal{L}(M_\perp)$. Then from \eqref{26} and \eqref{27}, we get
\begin{align}
\label{28}
2\|\sigma(X, Z)\|^2&=(X^2\ln f)g(Z, Z)-(\nabla_XX\ln f)g(Z, Z)+((\varphi X)^2\ln f)g(Z, Z)\notag\\
&-(\nabla_{\varphi X}\varphi X\ln f)g(Z, Z)+2(\varphi X\ln f)^2g(Z, Z)\notag\\
&-\frac{c}{2}\big(\eta^2(X)-\|X\|^2\big)g(Z, Z).
\end{align}
Now, consider the orthonormal frame fields of $M_T$ and $M_\perp$ as follows: $\{X_1,\cdots, X_p, X_{p+1}=\varphi X_1,\cdots, X_{2p}=\varphi X_p, X_{2p+1}=\xi\}$ and $\{Z_1,\cdots, Z_q\}$ are the frame fields of the tangent spaces of $M_T$ and $M_\perp$, then summing over $i=1,\cdots,2p+1$ and $j=1,\cdots, q$ in \eqref{28}, thus we have
\begin{align*}
2\sum_{i=1}^{2p+1}\sum_{j=1}^{q}\|\sigma(X_i, Z_j)\|^2&=-\sum_{i=1}^{2p+1}\sum_{j=1}^{q}\left((\nabla_X{_i}X_i(\ln f))-X_i^2(\ln f)\right)g(Z_j, Z_j)\\
&-\sum_{i=1}^{2p+1}\sum_{j=1}^{q}\left((\nabla_{\varphi X_i}\varphi X_i(\ln f))-(\varphi X_i)^2(\ln f)\right)g(Z_j, Z_j)\\
&+2\sum_{i=1}^{2p+1}\sum_{j=1}^{q}(\varphi X_i(\ln f))^2g(Z_j, Z_j)\\
&-\sum_{i=1}^{2p+1}\sum_{j=1}^{q}\frac{c}{2}\big(\eta^2(X_i)-\|X_i\|^2\big)g(Z_j, Z_j).
\end{align*}
Then, from the definition of the gradient and \eqref{10}, we derive
\begin{align*}
2\sum_{i=1}^{2p+1}\sum_{j=1}^{q}\|\sigma(X_i, Z_j)\|^2=-2q\Delta(\ln f)+2q\|\varphi\nabla(\ln f)\|^2+pqc
\end{align*}
or
\begin{align}
\label{29}
\|\sigma\|^2_{T\perp}=-q\Delta(\ln f)+q\|\nabla(\ln f)\|^2+\frac{pqc}{2}\notag\\
\end{align}
Thus, from Theorem 1.1 of  \cite{U5}, we have $\|\sigma\|^2\geq2\|\sigma\|^2_{T\perp}$ only and left all other terms in the right hand side in the inequality of that theorem. Then using \eqref{29} in this relation we get inequality (i) with equality sign holds if and only if
\begin{align}
\label{30}
\sigma(\mathcal{L}(M_T), \mathcal{L}(M_T))=\sigma(\mathcal{L}(M_\perp), \mathcal{L}(M_\perp))=0.
\end{align}
i.e., $M$ is both $M_T$ and $M_\perp$-totally geodesic. The equality case holds just like Theorem 3.1. Hence, the proof is complete.

\section{Some Applications of Theorem 1.1}
For a warped product CR-submanifold $M_T\times_fM_\perp$ of a cosymplectic space form, if the holomorphic submanifold $M_T$ is compact, then we have the following useful results.

\begin{theorem} Let $M_T\times_fM_\perp$ be a warped product CR-submanifold of a cosymplectic space form $\tilde M(c)$ such that $M_T$ is a compact invariant submanifold of $\tilde M(c)$. Then
\begin{enumerate}
\item[{(i)}] For any $s\in M_\perp$, we have
\begin{align}
\label{31}
\int_{M_T\times\{s\}}\|\sigma\|^2\,dV_T\geq pqc\,\rm{vol}(M_T),
\end{align}
where $dV_T$ and $\rm{vol}(M_T)$ are the volume element and the volume of $M_T$, respectively and $2p+1=\dim M_T,\,q=\dim M_\perp$.
\item[{(ii)}] The equality sign holds in (i) identically if and only if $M$ is a Riemannian product of $M_T$ and $M_\perp$, i.e., the warping function $f$ is constant on $M$ .
\end{enumerate}
\end{theorem}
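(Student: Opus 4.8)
The plan is to integrate the pointwise estimate \eqref{5} of Theorem 1.1 over the compact factor $M_T\times\{s\}$ against its volume element and then use compactness to dispose of the two derivative terms in a controlled way. Since $\ln f$ is a function on $M_T$ and both $\nabla$ and $\Delta$ are taken with respect to $M_T$, the slice $M_T\times\{s\}$ is isometric to $(M_T,g_1)$, and integrating \eqref{5} over it gives
\begin{align*}
\int_{M_T\times\{s\}}\|\sigma\|^2\,dV_T\geq 2q\int_{M_T}\|\nabla(\ln f)\|^2\,dV_T-2q\int_{M_T}\Delta(\ln f)\,dV_T+pqc\,\mathrm{vol}(M_T).
\end{align*}

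The decisive step is that $M_T$ is compact and without boundary, so by Green's theorem (equivalently the Hopf lemma) the integral of any Laplacian vanishes, i.e. $\int_{M_T}\Delta(\ln f)\,dV_T=0$. This is exactly where the compactness hypothesis enters, and it eliminates the middle term. The surviving gradient term $\int_{M_T}\|\nabla(\ln f)\|^2\,dV_T$ is manifestly nonnegative, so discarding it can only decrease the right-hand side, which yields
\begin{align*}
\int_{M_T\times\{s\}}\|\sigma\|^2\,dV_T\geq pqc\,\mathrm{vol}(M_T),
\end{align*}
establishing (i).

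For the equality discussion I would trace back the two places where slack was introduced. Discarding the nonnegative term forces $\int_{M_T}\|\nabla(\ln f)\|^2\,dV_T=0$; since the integrand is continuous and nonnegative on the connected manifold $M_T$, this gives $\nabla(\ln f)\equiv 0$, hence $f$ is constant, so $M=M_T\times_f M_\perp$ has trivial warping and is a Riemannian product of $M_T$ and $M_\perp$. This is the substantive (forward) direction of (ii). One should also record that attaining equality simultaneously requires the pointwise equality in \eqref{5}, which by part (ii) of Theorem 1.1 means $M_T$ is totally geodesic and $M_\perp$ is totally umbilical in $\tilde M(c)$; conversely, a constant $f$ makes both derivative terms vanish identically, reducing the chain of inequalities to its equality case.

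The argument is a soft integration on top of Theorem 1.1, so I anticipate no serious computational difficulty. The only point that genuinely needs care — and the sole reason the compactness assumption is present — is the vanishing of $\int_{M_T}\Delta(\ln f)\,dV_T$, which rests on $M_T$ being a closed (compact, boundaryless) Riemannian manifold; everything else is bookkeeping and a nonnegativity observation.
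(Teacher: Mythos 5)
Your proof is correct and follows essentially the same route as the paper: integrate the inequality of Theorem 1.1 over the compact slice $M_T\times\{s\}$, invoke Hopf's Lemma to annihilate the term $\int_{M_T}\Delta(\ln f)\,dV_T$, drop the nonnegative gradient integral, and recover $f$ constant from the vanishing of that integral in the equality case. The only cosmetic difference is that you spell out the role of connectedness and the pointwise equality in \eqref{5} slightly more explicitly than the paper does.
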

\begin{proof} For a warped product submanifold $M_T\times_fM_\perp$  with compact $M_T$, from Theorem 1.1, we have
\begin{align}
\label{32}
\int_{M_T\times\{s\}}\|\sigma\|^2\,dV_T\geq 2q\int_{M_T\times\{s\}}\left(\|\nabla(\ln f)\|^2-\Delta(\ln f)+\frac{pc}{2}\right)\,dV_T.
\end{align}
Since $M_T$ is compact, it follows from Hopf's Lemma that
\begin{align}
\label{33}
\int_{M_T\times\{s\}}\|\sigma\|^2\,dV_T\geq 2q\int_{M_T\times\{s\}}\left(\|\nabla(\ln f)\|^2\right)\,dV_T+pqc\,\rm{vol}(M_T).
\end{align}
Thus, inequality \eqref{33} implies inequality \eqref{31}, with the equality sign holding if and only if  (1) $f$ is constant i.e., $M$ is Riemannian product and (2) the equality $\|\sigma\|^2=pqc$ holds identically. Hence, the theorem is proved completely.
\end{proof}

Next, let us assume $f$ is non-constant. Then the minimal principle on $\lambda_1$ yields (see \cite{Be} page 186, \cite{C4})
\begin{align}
\label{34}
\int_{M_T}\|\nabla(\ln f)\|^2\,dV_T\geq \lambda_1\int_{M_T}\left(\ln f\right)^2\,dV_T
\end{align}
with equality holding if and only if $\Delta\ln f=\lambda_1\ln f$ holds. 

Now, we give the following result.

\begin{theorem} Let $M=M_T\times_fM_\perp$ be a warped product CR-submanifold of a cosymplectic space form $\tilde M(c)$ with compact $M_T$. If the warping function $f$ is non-constant, then, for each $s\in M_\perp$, we have
\begin{align}
\label{35}
\int_{M_T\times\{s\}}\|\sigma\|^2\,dV_T\geq 2q\lambda_1\int_{M_T}\left(\ln f\right)^2\,dV_T+pqc\,\rm{vol}(M_T)
\end{align}
where $dV_T$, $\lambda_1$ and $\rm{vol}(M_T)$ are the volume element, the first positive eigenvalue of the Laplacian $\Delta$ and the volume of $M_T$, respectively.\\

\noindent Moreover, the equality sign of \eqref{35} holds identically if and only if we have:
\begin{enumerate}
\item[(i)] $\Delta\ln f=\lambda_1\ln f$
\item[(ii)] $M$ is both $M_T$-totally geodesic and $M_\perp$-totally geodesic.
\end{enumerate}
\end{theorem}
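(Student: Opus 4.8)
The plan is to obtain \eqref{35} by chaining the integral estimate already established inside the proof of Theorem 4.1 with the eigenvalue bound \eqref{34}. Concretely, I would not start from the final inequality \eqref{31} but from the sharper intermediate step \eqref{33}, which already isolates the gradient term:
\begin{align*}
\int_{M_T\times\{s\}}\|\sigma\|^2\,dV_T\geq 2q\int_{M_T\times\{s\}}\|\nabla(\ln f)\|^2\,dV_T+pqc\,\mathrm{vol}(M_T).
\end{align*}
Recall that in passing from \eqref{32} to \eqref{33} the term $-2q\int\Delta(\ln f)\,dV_T$ was eliminated by Hopf's lemma, using compactness of $M_T$; since this is an exact identity rather than an estimate, it imposes no equality constraint later.

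Next, since $f$ is assumed non-constant, $\ln f$ is a non-constant function on the compact manifold $M_T$, so I would apply the minimal principle \eqref{34} for the first positive eigenvalue $\lambda_1$ of $\Delta$ to the gradient integral:
$$\int_{M_T}\|\nabla(\ln f)\|^2\,dV_T\geq\lambda_1\int_{M_T}(\ln f)^2\,dV_T.$$
Multiplying this bound by $2q$ and substituting it into the displayed inequality above yields exactly \eqref{35}.

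For the equality discussion I would observe that the final estimate is the composition of two independent inequalities, so equality holds identically if and only if equality holds in each. Equality in \eqref{34} is, by the minimal principle, equivalent to $\ln f$ being a $\lambda_1$-eigenfunction, i.e. $\Delta\ln f=\lambda_1\ln f$, which is condition (i). Equality in the estimate inherited from Theorem 1.1 (equivalently in \eqref{33}/\eqref{32}) is, by the equality case of that theorem, equivalent to \eqref{30}, namely $M$ being both $M_T$-totally geodesic and $M_\perp$-totally geodesic, which is condition (ii). Since the first condition constrains only $f$ while the second constrains only $\sigma$, the two can be read off independently and together give the stated characterization.

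The main obstacle — indeed essentially the only delicate point — is the correct invocation of \eqref{34}. The Rayleigh characterization of $\lambda_1$ only yields $\int\|\nabla\psi\|^2\geq\lambda_1\int\psi^2$ for functions $\psi$ orthogonal to the constants, so one must ensure that $\ln f$ is normalized so as to be admissible in this variational principle (or interpret \eqref{34} with that convention built in). Once \eqref{34} is granted in the stated form, everything else is a direct substitution, and the only genuine care required is in keeping the two equality cases — one analytic (the eigenfunction equation) and one geometric (total geodesy of both factors) — logically separate.
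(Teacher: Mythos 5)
Your proof is correct and is essentially identical to the paper's own argument: the paper likewise obtains \eqref{35} by combining the intermediate estimate \eqref{33} with the eigenvalue bound \eqref{34}, and reads off the two equality conditions (the eigenfunction equation and total geodesy of both factors) from the two inequalities independently. Your closing caveat about $\ln f$ needing to be admissible (orthogonal to constants) in the Rayleigh characterization of $\lambda_1$ is a genuine subtlety that the paper itself passes over silently by citing the references for \eqref{34}.
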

\begin{proof} By combining \eqref{33} and \eqref{34}, we get inequality \eqref{35}. From the above discussion, we know that the equality sign of \eqref{35} holds identically if and only if we have (i) $\Delta\ln f=\lambda_1\ln f$ and (ii) the warped product is both $M_T$ and $M_\perp$ totally geodesic.
\end{proof}

Another motivation of Theorem 1.1 is to give the expression of Dirichlet energy of the warping function in physics, which is defined of a function $\psi$ on a compact manifold $M$ as follows
\begin{align}
\label{36}
E(\psi)=\frac{1}{2}\int_{M}\|\nabla\psi\|^2\,dV
\end{align}
where $\nabla\psi$ is the gradient of $\psi$ and $dV$ is the volume element.

Now, we give the expression of Dirichlet energy of the warping function for a contact CR-warped product $M_T\times_fM_\perp$ in a cosymplectic space form $\tilde M(c)$ with compact invariant submanifold $M_T$. For any $s\in M_\perp$, from Theorem 1.1, we have 
\begin{align}
\label{37}
\int_{M_T\times\{s\}}\|\sigma\|^2\,dV_T\geq2q\int_{M_T\times\{s\}}\left(\|\nabla\ln f\|^2+\frac{pc}{2}\right)\,dV_T.
\end{align}
From \eqref{36} and \eqref{37}, we find 
\begin{align*}
E(\ln f)\leq\frac{1}{4q}\int_{M_T\times\{s\}}\|\sigma\|^2\,dV_T-\frac{pc}{4}\,\rm{vol}(M_T)
\end{align*}
which is the Dirichlet energy $E(\ln f)\,(0\leq E(\ln f)<\infty)$ of the warping function.\\

\noindent{\bf{Acknowledgements.}} The authors would like to express their hearty thanks to anonymous referees for their valuable suggestions and comments towards the improvement of this manuscript, especially, Section 4.

\noindent Author's addresses:\\

\noindent Falleh R. Al-Solamy:
 
\noindent Department of Mathematics, Faculty of Science, King Abdulaziz University, 21589 Jeddah, Saudi Arabia

\noindent {\it E-mail}: {\tt falleh@hotmail.com}\\

\noindent Siraj Uddin:

\noindent Department of Mathematics, Faculty of Science, King Abdulaziz University, 21589 Jeddah, Saudi Arabia

\noindent {\it E-mail}: {\tt siraj.ch@gmail.com}\\

\end{document}